\documentclass[11pt, oneside]{amsart}       % use "amsart" instead of "article" for AMSLaTeX format

\usepackage[T1]{fontenc}
\usepackage{mathpazo}
\usepackage{tabularx}
  
\usepackage{hyperref}
\usepackage{booktabs}
\usepackage{xcolor}
\usepackage{tcolorbox}
\hypersetup{
    colorlinks,
    linkcolor={red!50!black},
    citecolor={blue!50!black},
    urlcolor={blue!80!black}
}

\usepackage[all,cmtip]{xy}
\usepackage{graphicx}    
\usepackage{tikz-cd}            % Use pdf, png, jpg, or epsÂ§ with pdflatex; use eps in DVI mode
\usepackage{amsthm,amssymb, amsfonts}   
\usepackage{amsaddr} 
\usepackage{mathrsfs}
\newtheorem{theorem}{Theorem}[section]

\newtheorem{corollary}[theorem]{Corollary}
\newtheorem{definition}[theorem]{Definition}
\newtheorem{lemma}[theorem]{Lemma}
\theoremstyle{remark}
\newtheorem*{remark}{Remark}
\usepackage{tcolorbox}
\newtheorem*{example}{Example}

\renewcommand{\geq}{\geqslant}

\usepackage[utf8]{inputenc}

\newcommand{\YY}{\mathcal{Y}}

\newcommand{\XX}{\mathcal{X}}

\newcommand{\Mod}{\operatorname{Mod}}

\newcommand{\cC}{\mathscr{C}}

\newcommand{\idt}{\operatorname{id}}

\newcommand{\ani}{\mathrm{Ani}}
\newcommand{\fnt}{\mathrm{Fin}}

\title{A Remark on Static Animations}
\author{Emile Bouaziz}
\address{Academia Sinica}
\begin{document}
\maketitle
\begin{abstract}
We record a general condition guaranteeing that the animation of a small $1$-category remains a $1$-category. The proof is extremely elementary universal algebra. This recovers as a very special case the striking observation of Antieau, \cite{Ant}, that the animation of $\fnt^{\mathrm{op}}$ is a $1$-category. Our result is a fair amount more general, and for example applies to $\fnt^{\mathrm{op}}_{\mathcal{G}}$ with $\mathcal{G}$ a groupoid, as well as beyond this.
\end{abstract}

\section{Introduction} \subsection{Animation} Let $\cC$ be a small category \footnote{Throughout \emph{category} is understood to mean $1$-category} with finite coproducts. Then its \emph{animation} is the $\infty$-category $$\text{An}(\cC):=\mathrm{Fun}^{\times}(\cC^{\mathrm{op}},\mathrm{Ani}),$$ where $\mathrm{Ani}$ is the $\infty$-category of anima and the superscript $\times$ indicates that we are looking only at product-preserving functors. In general $\mathrm{An}$ will turn a category into an $\infty$-category, and indeed this is sort of the point. \subsection{Static animations} In the short note \cite{Ant}, Antieau observed the striking fact that the animation of $\fnt^{\text{op}}$ is in fact \emph{static}, which is to say a category. The proof in \cite{Ant} is rather lovely and so we record a sketch here. We must show that $\mathrm{Set}\to\mathrm{Ani}$ induces an equivalence $$\mathrm{Fun}^{\times}(\cC,\mathrm{Set})\to \mathrm{Fun}^{\times}(\cC,\mathrm{Ani}).$$ Let $\XX:\fnt\to\ani$ denote a product-preserving functor. $\XX(\{1,...,p\})$ admits the structure of an abelian group object in $\ani$ on which $p\simeq 0$. Any set $S$ is a retract of $\{1,...,p\}^N$ for large enough $N$. Choosing distinct primes $p$ and $q$ and using an Eckmann-Hilton argument we deduce that all higher homotopy groups of $\XX(S)$ vanish.  The same result appears in \cite{Leh}, see Theorem 5.6 of \emph{loc. cit}. Lehner's proof differs from Antieau's, although it is similarly elementary and elegant. If $\YY\in\ani$ admits an unital associative monoid structure $m$ so that $m$ is idempotent in the sense that (symbolically) $m(y,y)=y$, then $\YY$ is equivalent to a set. Indeed, by Eckmann-Hilton it suffices to note that any group whose multiplication is idempotent is trivial, which is clear. Now any set $S$ admits an idempotent multiplication, for example we can linearly order $S$ and take $m(s,t)=\text{max}\{s,t\}.$ \footnote{We have slightly altered the argument of \cite{Leh} here, but the idea is the same.}

\section{The main result} Our goal in this note is to record a more general result, and supply what is arguably an even simpler proof (in that it does not even invoke Eckmann-Hilton).

Recall, see for example \cite{Cock}, that we call a category $\cC$ \emph{distributive} if it admits finite products and finite coproducts, and products distribute over coproducts.\begin{definition} We will call a category $\cC$ with finite coproducts and finite products \emph{decidable} if for all $X$ there is an object $X^{\neq}$, and a morphism $j_{\neq}:X^{\neq}\to X^2$ so that the induced morphism $\Delta\amalg j_{\neq}: X\amalg X^{\neq}\to X^2$ is an equivalence. \end{definition}

\begin{remark} Note that we require no functoriality in the object $X^{\neq}$; we ask only for its existence. \end{remark}

\begin{example} \begin{itemize}\item  $\fnt$ is decidable; indeed we just take $X^{\neq}$ to be the complement of the diagonal. It is also of course distributive. \item If $G$ is a group then $\fnt_G:=\mathrm{Fun}(BG,\fnt)$ is decidable, again we just take $X^{\neq}$ to be the complement of the diagonal. This works as for all $g$ in $G$ we have $gx=gy\iff x=y$. Again, $\fnt_G$ is obviously distributive. \item The previous example can be generalized to $\mathrm{Fun}(\mathcal{G},\fnt)$, where $\mathcal{G}$ is a groupoid. \item If $M$ is a monoid then the same construction need not work, indeed $X^{\neq}$ will in general not be preserved by the $M$-action, as it may collapse distinct points. \item If $A$ is an algebra then $\Mod(A)$ is decidable as for any module $N$ the diagonal $N\to N\times N\simeq N\oplus N$ is split. On the other hand, it is not distributive. \end{itemize}\end{example}

\begin{definition} Let $\cC$ be a category with finite products and let $x\in\cC$ be an object. We call a morphism $\mu:x^3\to x$ a \emph{majority operation} if we have  $$\mu(a,a,b)=\mu(a,b,a)=\mu(b,a,a)=a.\footnote{The author asked Google's Gemini whether such operations had a name and was informed that \emph{majority} is the established descriptor in the universal algebra literature.}$$ \end{definition}

\begin{remark} We have abused notation above by referring to elements $a,b$ of the object $x\in\cC$. With better manners we would write the identity $\mu(a,a,b)=a$ as $$\mu\circ (\Delta^1_{1,2}\times\idt)=\pi_1,$$ where $\Delta^1_{1,2}:x^2\to x^3$ is the diagonal corresponding to the map $\{1,2,3\}\to \{1,2\}$ given by sending $1,2$ to $1$ and $3$ to $2$.  \end{remark}

\begin{lemma} If $\cC$ is decidable and distributive and if $x \in\cC$, then $x$ admits a majority operation. \end{lemma}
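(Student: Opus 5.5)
The plan is to build $\mu$ by a case split on whether the first two coordinates are equal. Decidability supplies the decomposition, and distributivity carries it over to $x^3$. Informally, set
$$\mu(a,b,c)=\begin{cases} a & \text{if } a=b,\\ c & \text{if } a\neq b.\end{cases}$$
Checking the three identities informally: $\mu(a,a,b)=a$ falls in the first case. For $\mu(a,b,a)$, either $a=b$ and the value is $a$, or $a\neq b$ and the value is the third entry, which is again $a$. For $\mu(b,a,a)$, either $b=a$ and the value is $b=a$, or $b\neq a$ and the value is the third entry $a$. So the work is only to make this rigorous in $\cC$.

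\textbf{Construction.} By decidability we have an isomorphism $\Delta\amalg j_{\neq}: x\amalg x^{\neq}\to x^2$. Taking the product with $x$ and using distributivity gives an isomorphism
$$(x\times x)\amalg (x^{\neq}\times x)\xrightarrow{(\Delta\times\idt)\amalg(j_{\neq}\times\idt)} x^2\times x = x^3 .$$
A map out of $x^3$ is therefore the same as a pair of maps out of the two summands. Define $\mu$ to be $\pi_1:x\times x\to x$ on the first summand and $\pi_2:x^{\neq}\times x\to x$ on the second.

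\textbf{Verification.} Each identity $\mu\circ\phi=\psi$ is an equality of maps $x^2\to x$. For $\mu(a,a,b)=a$ we have $\phi=\Delta^1_{1,2}\times\idt=\Delta\times\idt$. This map factors through the first summand via the identity of $x\times x$, so $\mu\circ\phi=\pi_1$ directly.

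For the other two identities I would precompose with the isomorphism $x\amalg x^{\neq}\cong x^2$ and check each summand separately.
\begin{itemize}
\item For $(a,b)\mapsto(a,b,a)$ on the $x$ summand, the composite is the full diagonal $x\to x^3$. It factors through the first summand as $\Delta:x\to x\times x$, so $\mu$ gives $\idt_x$, which equals $\pi_1\circ\Delta$.
\item For $(a,b)\mapsto(a,b,a)$ on $x^{\neq}$, the composite is $(j_{\neq},\pi_1 j_{\neq})$. It factors through the second summand as $(\idt,\pi_1 j_{\neq}):x^{\neq}\to x^{\neq}\times x$, so $\mu$ gives $\pi_1 j_{\neq}$, as required.
\item The identity for $(b,a)\mapsto(b,a,a)$ is handled the same way. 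On the $x^{\neq}$ summand one obtains $\pi_2 j_{\neq}$, which is the desired second projection.
\end{itemize}
Since the coproduct inclusions are jointly epimorphic, the identities hold on all of $x^2$.

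The only delicate point is the bookkeeping in the last step: showing that each composite $x\to x^3$ or $x^{\neq}\to x^3$ factors through the claimed summand of the distributive decomposition. This is immediate once everything is written as maps into $x^2\times x$ with its first factor decomposed.
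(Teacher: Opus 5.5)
Your proof is correct and is essentially the paper's argument: the paper decomposes the last two factors, $x^3\cong x\times(x\amalg x^{\neq})\cong(x\times x)\amalg(x\times x^{\neq})$, and takes $\mu(a,b,c)=c$ if $b=c$ and $a$ otherwise, which is your operation with the arguments reversed. Your summand-by-summand verification supplies the check that the paper leaves as easily checked.
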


\begin{proof} $x^3=x\times x^2\simeq x\times (x\amalg x^{\neq})$ and by distributivity we can write this as $(x\times x)\amalg x\times x^{\neq}$. This is supplied with an equivalence $(\idt\times\Delta)\sqcup(\idt\times j_{\neq})\to x\times x^2=x^3$ and we define $$\mu=(\pi_2\circ (\idt\times\Delta))\amalg(\pi_1\circ(\idt\times j_{\neq})).$$ This is easily checked to be a majority operation.\end{proof}

\begin{remark} If $x\in \fnt$ then we have defined a morphism $\mu_x:x^3\to x$ by $$\mu_x(a,b,c)=\begin{cases}a,&b\neq c,\\c,&b=c.\end{cases}.$$\end{remark}

\begin{theorem} Let $\cC$ be a distributive decidable category. Then the natural morphism $\mathrm{Fun}^\times(\cC,\mathrm{Set})\to\mathrm{Fun}^\times(\cC,\ani)$ is an equivalence of $\infty$-categories. If $\cC$ is small, then equivalently the animation $\mathrm{An}(\cC^{\mathrm{op}})$ is a $1$-category.\end{theorem}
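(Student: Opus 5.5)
The plan is to show that every product-preserving functor $\XX:\cC\to\ani$ takes values in $0$-truncated anima, i.e.\ in sets. Granting this, the theorem follows formally. The functor $\mathrm{Set}\to\ani$ is fully faithful with essential image the discrete anima, so postcomposition identifies $\mathrm{Fun}(\cC,\mathrm{Set})$ with the full subcategory of $\mathrm{Fun}(\cC,\ani)$ on functors with discrete values. The inclusion $\mathrm{Set}\to\ani$ preserves products, so a functor into $\mathrm{Set}$ preserves products if and only if its composite into $\ani$ does. Hence $\mathrm{Fun}^\times(\cC,\mathrm{Set})\to\mathrm{Fun}^\times(\cC,\ani)$ is fully faithful, and it is essentially surjective once every $\XX$ lands in discrete anima. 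For $\cC$ small, $\mathrm{An}(\cC^{\mathrm{op}})=\mathrm{Fun}^\times(\cC,\ani)$ by definition, so it is then equivalent to the $1$-category $\mathrm{Fun}^\times(\cC,\mathrm{Set})$.

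\textbf{Transporting the majority operation.} Fix $x\in\cC$ and set $Y=\XX(x)$. By the preceding Lemma, $x$ has a majority operation $\mu:x^3\to x$. Since $\XX$ preserves products, $\XX(\mu)$ is a map $m:Y^3\to Y$. The three identities $\mu\circ(\Delta^1_{1,2}\times\idt)=\pi_1$, and their two permuted analogues, are equalities of morphisms in $\cC$. Applying $\XX$ therefore gives three homotopies $H_1,H_2,H_3$ witnessing
\[ m(a,a,b)\simeq a,\qquad m(a,b,a)\simeq a,\qquad m(b,a,a)\simeq a \]
as maps $Y^2\to Y$.

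\textbf{The key computation.} Fix a basepoint $y\in Y$ and $n\geq1$, and let $G=\pi_n(Y,y)$. Restricting $H_1$ to the point $(y,y)$ gives a path $\gamma$ from $m(y,y,y)$ to $y$. Using $\gamma$ to change basepoints, $m$ induces a group homomorphism $f:G^3\to G$. Since $f$ is a homomorphism out of a product, it has the form $f(a,b,c)=\alpha(a)\beta(b)\gamma'(c)$ with $\alpha,\beta,\gamma':G\to G$ homomorphisms whose images pairwise commute.

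The three homotopies $H_i$ now say that, up to conjugation by fixed elements $c_i\in G$, we have
\[ f(g,g,h)=g,\qquad f(g,h,g)=g,\qquad f(h,g,g)=g \]
for all $g,h\in G$. The conjugating elements come from comparing the basepoint paths $H_i|_{(y,y)}$ with $\gamma$; for $n\geq2$ they are irrelevant.

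Taking $g=1$ in each identity gives $\gamma'(h)=1$, $\beta(h)=1$ and $\alpha(h)=1$, because any conjugate of $1$ is $1$. Thus $f$ is trivial. Feeding $f\equiv1$ back into $f(g,g,1)=c_1gc_1^{-1}$ forces $g=1$. Hence $\pi_n(Y,y)=0$ for all $n\geq1$ and all basepoints $y$, so $Y$ is $0$-truncated.

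\textbf{Main obstacle.} The only delicate step is the basepoint bookkeeping when $n=1$. The three homotopies need not induce the same path from $m(y,y,y)$ to $y$, so the identities hold only up to conjugation. I would handle this exactly as above: the argument only ever evaluates the identities at $g=1$ or concludes that an expression is trivial, and both statements are conjugation-invariant. Notably, no Eckmann--Hilton argument is needed.
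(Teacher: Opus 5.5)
Your proof is correct and is essentially the paper's argument: transport the majority operation along $\XX$, pass to $\pi_n$, and note that a homomorphism $f:G^3\to G$ satisfying the majority identities factors as $f(a,b,c)=f(a,1,1)f(1,b,1)f(1,1,c)$, which forces $f\equiv 1$ and hence $G=1$. Your basepoint bookkeeping is more careful than the paper's footnote, with one small inaccuracy: for $n\geq 2$ the discrepancy between the $H_i$ is the action of a class in $\pi_1(Y,y)$ on $\pi_n(Y,y)$. That is an automorphism of $G$ which need not be trivial, rather than conjugation inside $G$, but your argument only uses that the twist fixes $1$ and is injective, so it goes through unchanged.
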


\begin{proof} As noted in \cite{Ant}, it suffices to prove that if $\XX:\fnt\to\ani$ is product-preserving, then $\XX(x)$ is a set for all $x\in\cC$. Now $\XX(x)\in\ani$ admits a majority operation\footnote{Note that we need not worry about what higher coherences should be inserted as we are going to take homotopy groups. We need only a map $\mu:\XX^3\to \XX$ so that there is \emph{some} path $\mu\circ(\Delta^1_{1,2}\times\idt)\simeq \pi_1$ and similarly for the other majority identities.} which preserves all basepoints as (symbolically) $\mu(x,x,x)=x$ for any majority operation. Then its homotopy groups, $\pi_j(\XX(x),c)\,(j\geq 1)$, also admit majority operations. It suffices then to note that any group $G$ admitting a majority operation (in the category of groups) $\mu:G^3\to G$, is necessarily trivial. Indeed  $$\mu(a,b,c)$$ $$=\mu((a,1,1)*(1,b,1)*(1,1,c))$$ $$=\mu(a,1,1)*\mu(1,b,1)*\mu(1,1,c)$$ $$=1*1*1=1.$$ In particular $\mu(a,a,b)=a=1,$ and we are done. \end{proof}

\begin{remark} After the first version of this note was uploaded to arxiv, Tom De Jong very helpfully pointed out the reference \cite{Taylor}, in which the author proves (among other things) that anima admitting majority operations are sets.\end{remark} 

\begin{corollary} Let $\mathcal{G}$ be a groupoid. Then $\cC=\mathrm{Fun}(\mathcal{G},\fnt)^{\mathrm{op}}$ has a static animation. In particular, this applies to the categories $\fnt^{\mathrm{op}}$ and their $G$-equivariant counterparts. \end{corollary}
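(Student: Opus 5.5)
The plan is to reduce the corollary to the Theorem by checking that $\mathcal{D}:=\mathrm{Fun}(\mathcal{G},\fnt)$ is distributive and decidable. Note that $\cC=\mathcal{D}^{\mathrm{op}}$, so $\mathrm{An}(\cC)=\mathrm{Fun}^\times(\mathcal{D},\ani)$, and the Theorem applied to $\mathcal{D}$ gives exactly this. One caveat: $\mathcal{D}$ should be small, or at least essentially small. If $\mathcal{G}$ is small this is automatic. In general I would replace $\mathcal{G}$ by a skeleton, which is a disjoint union of groups $BG_i$. Then $\mathcal{D}\simeq\prod_i \fnt_{G_i}$, and this is essentially small when the index set is small. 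I will assume $\mathcal{G}$ is small throughout.

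\emph{Distributivity.} Finite products and coproducts in $\mathcal{D}$ are computed pointwise in $\fnt$: for each object $g\in\mathcal{G}$ one takes $(X\times Y)(g)=X(g)\times Y(g)$ and $(X\amalg Y)(g)=X(g)\amalg Y(g)$. So the canonical map $X\times Y\amalg X\times Z\to X\times(Y\amalg Z)$ is a natural transformation whose components are the corresponding maps in $\fnt$. These components are bijections because $\fnt$ is distributive, and a natural transformation that is pointwise invertible is an isomorphism.

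\emph{Decidability.} For $X\in\mathcal{D}$, define $X^{\neq}(g)=X(g)^2\setminus\Delta$. This must be a subfunctor of $X^2$, i.e.\ closed under the action of the morphisms of $\mathcal{G}$, and this is the one point where the groupoid hypothesis is used. Every morphism $f$ of $\mathcal{G}$ is invertible, so $X(f)$ is a bijection. A bijection sends pairs with distinct coordinates to pairs with distinct coordinates, so $X(f)\times X(f)$ carries $X^{\neq}(g)$ into $X^{\neq}(g')$. Let $j_{\neq}$ be the inclusion. Then $\Delta\amalg j_{\neq}:X\amalg X^{\neq}\to X^2$ is a bijection pointwise, hence an isomorphism in $\mathcal{D}$.

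The Theorem now gives $\mathrm{Fun}^\times(\mathcal{D},\mathrm{Set})\simeq\mathrm{Fun}^\times(\mathcal{D},\ani)$, i.e.\ $\mathrm{An}(\cC)$ is a $1$-category. The special cases follow by taking $\mathcal{G}=\ast$, which gives $\fnt^{\mathrm{op}}$, and $\mathcal{G}=BG$, which gives $\fnt_G^{\mathrm{op}}$. There is no real obstacle here; the only point needing care is the invariance of the off-diagonal under the $\mathcal{G}$-action. This is exactly where the argument fails for general monoids, as noted in the Example.
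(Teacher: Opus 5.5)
Your proposal is correct and follows the paper's route: the paper deduces the corollary from the Theorem by noting, in its Example, that $\mathrm{Fun}(\mathcal{G},\fnt)$ is distributive and is decidable via the complement of the diagonal, which is stable under $\mathcal{G}$ because each $X(f)$ is a bijection. You carry out exactly this reduction, and you spell out the pointwise checks and the smallness caveat that the paper treats as obvious.
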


\end{document}